\theoremstyle{plain}
\newtheorem{theorem}{Theorem}[section]
\newtheorem{proposition}[theorem]{Proposition}
\newtheorem{lemma}[theorem]{Lemma}
\theoremstyle{definition}
\theoremstyle{remark}
\numberwithin{equation}{section}
\DeclareMathOperator{\GL}{GL}
\DeclareMathOperator{\PGL}{PGL}
\DeclareMathOperator{\Tr}{Tr}
\newcommand{\mybar}[1]{
  \mathchoice
  {#1\llap{$\overline{\phantom{\displaystyle\rm#1}}$}}
  {#1\llap{$\overline{\phantom{\textstyle\rm#1}}$}}
  {#1\llap{$\overline{\phantom{\scriptstyle\rm#1}}$}}
  {#1\llap{$\overline{\phantom{\scriptscriptstyle\rm#1}}$}}
} 
\renewcommand{\bar}{\mybar}
\def\F{\mathbf{F}}
\def\N{\mathbf{N}}
\def\P{\mathbf{P}}
\newcommand{\rR}{\mathcal{R}}
\newcommand{\sS}{\mathcal{S}}
\newcommand{\tT}{\mathcal{T}}
\def\phi{\varphi}
\title{Good recursive towers over prime fields exist}
\author[Bassa]{Alp Bassa}
\address{%
Bo\u gazi\c ci University,
Deparment of Mathematics,
34342 Bebek, Istanbul,
Turkey
}
\email{alp.bassa@boun.edu.tr}
\author[Ritzenthaler]{Christophe Ritzenthaler}
\address{%
  Institut de recherche math\'ematique de Rennes, %
  Universit\'e de Rennes 1, %
  Campus de Beaulieu, %
  35042 Rennes, %
  France. %
}
\email{christophe.ritzenthaler@univ-rennes1.fr }
\thanks{The authors acknowledge support by the PHC Bosphorus 39652NB - T\"UB\.ITAK 117F274 and thank the Nesin Mathematical Village for its inspiring environment.}
\date{\today} 
\subjclass[2010]{11G20, 11T71, 14H25, 14G05, 14G15}
\keywords{recursive tower ; explicit ; prime field ; correspondences ; Singer subgroup}
\begin{document}

\maketitle

\begin{abstract}
We give a construction and equations for good recursive towers over any finite field $\F_q$ with $q \ne 2$ and $3$.
\end{abstract}

\section{Introduction}

Let $\F_q$ be a finite field with $q=p^n$ elements, where $p$ is a prime and $n \geq 1$ an integer. A central quantity in the theory of curves over finite fields of large genus is the Ihara constant $A(q)$, which is defined as 
$$A(q):=\limsup_{g \to \infty} \frac{N_q(g)}{g},$$
where $N_q(g) =\max \#X(\F_q)$ with $X$ running over all  smooth projective absolutely irreducible curves defined over $\F_q$ of genus $g(X)=g > 0$. 
Drinfel'd--Vl\u{a}du\c{t} \cite{drinvla} obtained the inequality 
\begin{equation}\label{eq:drinvla}
A(q)\leq \sqrt{q}-1,
\end{equation}
which is still the only known upper bound. 

When the exponent $n$ is even,  Ihara \cite{ihara} used reductions of Shimura curves to show that equality holds in \eqref{eq:drinvla}. This result was also obtained by Tsfasman--Vl\u{a}du\c{t}--Zink \cite{tvz} for $n=2$ and $4$  using reductions of elliptic modular curves.  For any $q$, using class field theory, Serre \cite{serre} showed  that $A(q)>c\log(q)$ for some constant $c>0$ independent of $q$ (one can take for instance $c=\frac{1}{96}$ \cite[Theorem 5.2.9]{niexin}). In particular $A(q)>0$ for all $q$. The exact value of $A(q)$ is however unknown when $q$ is not a square.

Garcia and Stichtenoth \cite{garsti} marked a major turning-point in the theory by introducing the notion of recursive towers $\tT=(X_m)_{m \in \N}$. These towers are described by two morphisms $f,g : C_0 \rightrightarrows C_{-1}$ where $C_0$ and $C_{-1}$ are curves defined over $\F_q$. This defines recursively $(C_m)_{m \geq 0}$ by the fiber product
\begin{center}
\leavevmode
\xymatrix{ C_{m+1} \ar[r] \ar[d]_{\pi_m} & C_0 \ar[d]^g \\
C_m \ar[r]_{f_m} & C_{-1} 
}
\end{center}
where 
$f_m(P_0,\ldots,P_m)=f(P_m)$  and $\pi_m(P_0,\ldots,P_{m+1}) = (P_0,\ldots,P_m)$. In other terms one has
$$C_m = \{(P_0,\ldots,P_m) \in C_0^{m+1} \, : \; g(P_i)=f(P_{i-1}), 1 \leq i \leq  m\}.$$
 One then considers the normalization $X_m$ of $C_m$ and we will still denote by $\pi_m : X_{m+1} \to X_m$ the induced cover.  Although the curves $X_m$ are smooth, it is not automatic that they are absolutely irreducible or that their genus goes to infinity. If it is so, $\tT=(X_m)_{m \in \N}$ with the morphisms $\pi_m$ is called a \emph{tower}. It is a \emph{good tower} if the \emph{limit of the tower} 
 $$\lambda(\tT):=\limsup \frac{\#X_m(\F_q)}{g(X_m)}$$
is positive and  an \emph{optimal tower} if it reaches the Drinfel'd--Vl\u{a}du\c{t} bound. 

For $n$  even, Garcia and Stichtenoth exhibited explicit examples of optimal towers. Compared to constructions using class field towers or modular curves, recursive towers have the advantage of being more elementary and explicit, which is crucial for potential applications in coding theory and cryptography. This approach resulted subsequently in the discovery of many good  recursive towers when $n=2$ (see \cite{garsti2, garsti3}, among others), $n=3$ (see \cite{cubic1, cubic2}, among others) and $n>1$ (see \cite{BBGS}). The last one gives, under a unified construction, the best known lower bounds. The morphisms $f$ and $g$ resulting in good towers seem to be very special and in fact almost all have been proven to have a modular interpretation of various types (see \cite{elkies1, elkies2, ABBcubic} among others).

Still, when $n=1$, among the several approaches mentioned above, only class field theory has produced positive lower bounds for the Ihara constant $A(q)$. Despite several decades of attempts, no good modular towers or recursive towers were obtained and their existence has even been questioned. 

The present article breaks this barrier by exhibiting a good recursive tower $\tT=(X_m)_{m \in \N}$ over any field $\F_q$ with $q>3$ such that $\lambda(\tT) \geq \frac{2}{q-2}$. The structure of the tower is surprisingly simple. Starting from a good choice of cover $f : C_0 \to C_{-1}$, one constructs the morphism $g$ by choosing carefully an automorphism $\psi$ (resp. $\phi$) of $C_0$ (resp. $C_{-1})$ and defines $g=\phi \circ f \circ \psi$. More precisely, let us denote by $\rR$ a subset of $C_0(\bar{\F}_q)$ containing the ramification points of $f$ and $\sS \subset C_0(\F_q)$ the inverse image under $f$ of a set of totally split points of $f$. Let us assume that there exist an automorphism $\psi$ of $C_0$ (resp. $\phi$ of $C_{-1}$) preserving $\rR$ and $\sS$ (resp. $f(\rR)$ and $f(\sS)$). If we can ensure that $f,g$ define a tower $\tT=(X_m)_{m \in \N}$, then $\rR$ (resp. $\sS$) contains  the ramification locus (resp. splitting locus) of the tower  as defined in \cite{Stichtenoth}.  The general formula of \cite[Corollary 7.2.11.]{Stichtenoth}  then gives that $\lambda(\tT) \geq \frac{2 \#\sS}{\# \rR-2}$.

 We choose a particular cyclic Galois cover $f : \P^1 \to \P^1$ of degree $q+1$. The ramification points $Q,\bar{Q}$ of $f$ are defined over $\F_{q^2}$ whereas the point at infinity is totally split, i.e. $f^{-1}(\infty)=\P^1(\F_q)$. We let $\rR=\P^1(\F_{q^2}) \setminus \P^1(\F_q)$ and $\sS=\P^1(\F_q)$ and prove  that there exist automorphisms $\psi$ and $\phi$ preserving these sets and their image by $f$.
 Moreover, when $q>3$, we can impose that $g(Q)=Q$ and $Q,\bar{Q}$ are not ramification points of $g$. This implies that the curves $X_m$ are absolutely irreducible and of growing genus hence $\tT$ is a good tower. 

Our construction does not work over $\F_2$ and $\F_3$ and does not improve any of the known lower bounds on $A(q)$. It does, however, provide an elementary proof for $A(q)>0$ in case $q>3$. The simple framework we suggest makes it tempting to improve the limit by changing the cover $f : C_{0} \to C_{-1}$. Using the map $\mu$ in the proof of Lemma~\ref{lem:phi}, it is also possible to recover the tower as a twist of a good tower over a quadratic field extension. It might be possible to generalize this principle to other good towers. A further interesting question is to give (if possible) a modular interpretation for this tower. Our construction gives a first example of recursive towers over prime fields and we hope that it will stimulate the quest for better ones.

\section{Main result}
\label{sec:abs}

Let $q=p^n$ be a prime power and for $x \in \F_{q^2}$ denote $\bar{x}$ its Galois conjugate over $\F_q$. Let  $Q=(\theta:1) \in \P^1(\F_{q^2})\backslash \P^1(\F_{q})$. The automorphism group of $\P^1$ over $\F_q$ is $\PGL_2(\F_q)$ and the isotropy group $G$ of $Q$ is a cyclic subgroup of $\PGL_2(\F_q)$ of order $q+1$. The conjugate point $\bar{Q}=(\bar{\theta}:1)$ has the same isotropy group $G$ and  all cyclic subgroups of $\PGL_2(\F_q)$ of order $q+1$ appear as isotropy subgroups of such $Q$. Because of the transitive action of $\PGL_2(\F_q)$ on $\P^1(\F_{q^2})\backslash \P^1(\F_{q})$, they are all conjugated (see for instance \cite[Th.2]{ValentiniMadan}). Seen as a subgroup of $\GL_2(\F_q)$, these subgroups are called \emph{Singer subgroups} and have been well studied.

The group $G$ defines a tame Galois cover $f : \P^1 \to \P^1$ of degree $q+1$, where $Q$ and $\bar{Q}$ are totally ramified and, by Riemann-Hurwitz formula, the only ramified points. As a consequence, since the order of $G$ is $q+1$, it acts transitively on $\P^1(\F_q)$.
We choose coordinates $(x:z)$ on $\P^1$ such that $f(Q)=Q$, $f(\bar{Q})=\bar{Q}$ and $f(\infty)=\infty$ where $\infty=(0:1)$. We then have that $f^{-1}(\infty) = \P^1(\F_p)$. Therefore $\infty$ is totally split whereas $Q$ and $\bar{Q}$ are the two branched points.
As mentioned in the introduction, we denote  $\rR=\P^1(\F_{q^2}) \setminus \P^1(\F_q)$ and $\sS=\P^1(\F_q)$. 

\begin{lemma} \label{lem:phi}
We have $f(\rR)=\{(\gamma:1)\, :\gamma \in \F_{q^2} \textrm{ with }\Tr_{\F_{q^2}/\F_q}(\gamma)=\Tr_{\F_{q^2}/\F_q}(\theta)\}$.
\end{lemma}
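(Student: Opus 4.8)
\noindent\emph{Proof sketch.}
The plan is to linearise $f$ over $\F_{q^2}$ and then track $\P^1(\F_q)$ and $\rR$ through the linearisation. Write $\bar Q=(\bar\theta:1)$, and let $\mu$ be the automorphism of $\P^1$ over $\F_{q^2}$ given by $\mu(x:z)=(x-\theta z:\,x-\bar\theta z)$; it sends $Q\mapsto(0:1)$, $\bar Q\mapsto(1:0)$, and one checks that the totally split point $\infty$ goes to $(1:1)$. Since $G$ fixes both $Q$ and $\bar Q$, the group $\mu G\mu^{-1}$ fixes $0$ and $\infty$, hence lies in the diagonal torus; being cyclic of order $q+1$, which is prime to $p$, it coincides with the group $\mathbf{U}$ of scalings by $(q+1)$st roots of unity. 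Thus $\mu\circ f\circ\mu^{-1}$ is a degree-$(q+1)$ self-cover of $\P^1$ with deck group $\mathbf{U}$, so it equals $A\circ(w\mapsto w^{q+1})$ for some $A\in\PGL_2(\F_{q^2})$; evaluating at $0$, $\infty$ and $1$ and using $f(Q)=Q$, $f(\bar Q)=\bar Q$ and $f(\infty)=\infty$ forces $A=\mathrm{id}$. Hence $w\bigl(f(P)\bigr)=w(P)^{\,q+1}$ for all $P$, where $w=u/v$ is the target coordinate. (This is the map $\mu$ mentioned in the introduction.)

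Next I would chase the relevant sets. Since $\mu$ is defined over $\F_{q^2}$ it permutes $\P^1(\F_{q^2})$; and for $P\in\P^1(\F_q)$, applying the $q$-power Frobenius to $\mu(P)$ interchanges $\theta$ and $\bar\theta$, so if $\mu(P)=(u:v)$ then $(u^q:v^q)=(v:u)$, i.e.\ $u^{q+1}=v^{q+1}$. As $\#\P^1(\F_q)=q+1=\#\{(\xi:1):\xi^{q+1}=1\}$, this shows $\mu$ maps $\P^1(\F_q)$ bijectively onto the ``norm-one circle'' $\{(\xi:1):\xi^{q+1}=1\}$, and hence $\mu(\rR)=\P^1(\F_{q^2})\setminus\{(\xi:1):\xi^{q+1}=1\}$. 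Applying the power map, $\mu\bigl(f(\rR)\bigr)=\{\nu^{q+1}:\nu\in\P^1(\F_{q^2}),\ \nu^{q+1}\ne1\}$; since $\nu^{q+1}=\Norm_{\F_{q^2}/\F_q}(\nu)$ on $\F_{q^2}^\times$ and the norm is surjective onto $\F_q^\times$ (with $0,\infty$ fixed), this equals $\P^1(\F_q)\setminus\{1\}$. Pulling back through $\mu^{-1}(u:v)=(u\bar\theta-v\theta:\,u-v)$, I obtain that $f(\rR)$ consists of $(\bar\theta:1)$ together with the points $\bigl(\tfrac{c\bar\theta-\theta}{c-1}:1\bigr)$ for $c\in\F_q\setminus\{1\}$.

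Finally, to identify these points with the claimed set I would compute, for $\gamma_c=\tfrac{c\bar\theta-\theta}{c-1}$,
\[
\Tr_{\F_{q^2}/\F_q}(\gamma_c)=\gamma_c+\overline{\gamma_c}=\frac{(c\bar\theta-\theta)+(c\theta-\bar\theta)}{c-1}=\frac{(c-1)(\theta+\bar\theta)}{c-1}=\Tr_{\F_{q^2}/\F_q}(\theta),
\]
and note $\Tr_{\F_{q^2}/\F_q}(\bar\theta)=\Tr_{\F_{q^2}/\F_q}(\theta)$; so $f(\rR)$ is contained in the right-hand side. Both sides have exactly $q$ elements — the left because $\mu^{-1}$ is injective and $\#\bigl(\P^1(\F_q)\setminus\{1\}\bigr)=q$, the right because $\Tr_{\F_{q^2}/\F_q}\colon\F_{q^2}\to\F_q$ is a surjective $\F_q$-linear map and so has fibres of size $q$ — so the inclusion is an equality, which is the assertion. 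The only genuinely non-formal step is the first one: pinning $f$ down, up to the twist $\mu$, as $w\mapsto w^{q+1}$. This is where the normalisation $f(\infty)=\infty$ is essential, since a stray constant in front of $w^{q+1}$ would shift $f(\rR)$ off the trace-$\Tr(\theta)$ coset; after that the argument is just bookkeeping with norms, traces and cardinalities.
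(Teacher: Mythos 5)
Your argument is correct and follows essentially the same route as the paper: conjugating $f$ by the same Möbius map $\mu(x)=(x-\theta)/(x-\bar\theta)$ to the norm map $w\mapsto w^{q+1}$, identifying $\mu(\P^1(\F_q))$ with the norm-one circle via Frobenius, using surjectivity of the norm, and pulling back through $\mu^{-1}$. The only differences are cosmetic: you pin down the constant by fixing $0,1,\infty$ instead of the paper's single evaluation, and you make explicit (via the trace computation and a cardinality count) the final identification that the paper calls ``immediate to verify.''
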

\begin{proof}
Let $\mu : x\mapsto (x-\theta)/(x-\bar{\theta})$ and consider $\tilde{f}=\mu \circ f \circ \mu^{-1} : \P^1 \to \P^1$. It defines a Galois cover of degree $q+1$ with fixed points $(0:1)$ and $\infty$. Hence $\tilde{f}$ corresponds to the rational map  $x \mapsto c N(x)$, with $N(x)=x^{q+1}$ for some constant $c \in \F_{q^2}$.  Actually $c=1$, since
$$\tilde{f}((\theta/\bar{\theta}:1)) = (c:1) = \mu\circ f((0:1)) = \mu(\infty) =(1:1).$$
If $u \in \F_q$ then ${(u-\theta)}/{(u-\bar{\theta})}={(u-\theta)}/{(\bar{u-\theta})}$ hence $N \circ \mu(P)= (1:1)$ for all $P \in \P^1(\F_q)$. The surjectivity of the norm map from $\F_{q^2}$ to $\F_q$ implies that $$N(\mu(\rR))=\P^1(\F_q) \setminus \{(1:1)\} .$$
It is immediate to verify that 
\[f(\rR)=\mu^{-1}\left(\P^1(\F_q) \setminus \{(1:1)\}\right)=\{(\gamma:1)\, :\gamma \in \F_{q^2} \textrm{ with }\Tr_{\F_{q^2}/\F_q}(\gamma)=\Tr_{\F_{q^2}/\F_q}(\theta)\}.\tag*{\qedhere}\]
\end{proof}

 Looking for automorphisms $\phi,\psi$ of $\P^1$  such that $\psi$ preserves $\rR, \sS$ and $\phi$ their image under $f$, we see that it is necessary and sufficient for $\psi$ to be defined over $\F_q$ to preserve $\rR$ and $\sS$ and hence $\phi$ is also defined over $\F_q$ in order for $g$ to be defined over $\F_q$. 
Moreover, since $f(\sS)=\infty$, we need $\phi$ to be of the form $(x:z) \mapsto (c x+ d z:z)$. 

\begin{proposition} \label{prop:phi}
An automorphism $\phi : \P^1 \to \P^1$  of the form $(x:z) \mapsto (c x + d z:z)$ and defined over $\F_q$ preserves $f(\rR)$ if and only if 
$(1-c) a = 2 d$ with $c \ne 0$ and $a=\Tr_{\F_{q^2}/\F_q}(\theta)$.
\end{proposition}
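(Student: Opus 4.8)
The plan is to feed the explicit description of $f(\rR)$ from Lemma~\ref{lem:phi} into a direct computation with traces. Set $a = \Tr_{\F_{q^2}/\F_q}(\theta) \in \F_q$; then Lemma~\ref{lem:phi} identifies $f(\rR)$ with the affine set $T_a := \{(\gamma:1) : \gamma \in \F_{q^2},\ \Tr_{\F_{q^2}/\F_q}(\gamma) = a\}$, a nonempty fibre of the surjective $\F_q$-linear map $\Tr_{\F_{q^2}/\F_q}$, hence of cardinality exactly $q$ and consisting entirely of affine points. Since $\phi$ is an $\F_q$-rational automorphism of the form $(x:z)\mapsto(cx+dz:z)$, one has $c\in\F_q^\times$ (else $\phi$ is not invertible) and $d\in\F_q$; moreover $\phi$ fixes the point at infinity and sends $(\gamma:1)\mapsto(c\gamma+d:1)$ with $c\gamma+d\in\F_{q^2}$, so $\phi$ permutes the affine $\F_{q^2}$-points of $\P^1$ and cannot carry any point of $T_a$ off the affine line. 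Thus "$\phi$ preserves $f(\rR)$" means exactly "$\gamma\mapsto c\gamma+d$ preserves $T_a$".

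The computational core is the identity $\Tr_{\F_{q^2}/\F_q}(c\gamma+d) = c\,\Tr_{\F_{q^2}/\F_q}(\gamma) + 2d$ for $\gamma\in\F_{q^2}$, which holds precisely because $c,d\in\F_q$ --- one uses $\F_q$-linearity of the trace together with $\Tr_{\F_{q^2}/\F_q}(d)=2d$. For the "if" direction I would assume $(1-c)a=2d$ and check that every $(\gamma:1)\in T_a$ satisfies $\Tr_{\F_{q^2}/\F_q}(c\gamma+d) = ca+2d = ca+(1-c)a = a$, so $\phi(T_a)\subseteq T_a$; since $\phi$ is injective and $T_a$ is finite, this inclusion is an equality. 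For the "only if" direction I would choose a single $\gamma_0\in\F_{q^2}$ with $\Tr_{\F_{q^2}/\F_q}(\gamma_0)=a$ (possible since the trace is onto), so that $\phi(\gamma_0:1)\in T_a$ forces $a = \Tr_{\F_{q^2}/\F_q}(c\gamma_0+d) = ca+2d$, i.e. $(1-c)a=2d$; and $c\ne 0$ is automatic. Unwinding $f(\rR)=T_a$ then yields the stated equivalence.

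I do not expect a genuine obstacle here: the argument is the one-line trace identity plus bookkeeping. The points that need a little attention are (i) that it is the hypothesis "$\phi$ defined over $\F_q$" which puts $c,d$ in $\F_q$ and thereby makes $\Tr_{\F_{q^2}/\F_q}(c\gamma+d)$ collapse to $c\,\Tr_{\F_{q^2}/\F_q}(\gamma)+2d$ --- the claim is false without it; (ii) invoking surjectivity of $\Tr_{\F_{q^2}/\F_q}$ so that $T_a$ is nonempty in the "only if" direction; and (iii) promoting the inclusion $\phi(T_a)\subseteq T_a$ to an equality using that $\phi$ restricts to a bijection of the finite set $T_a$ onto its image.
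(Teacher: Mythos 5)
Your proof is correct and follows essentially the same route as the paper: plug the description of $f(\rR)$ from Lemma~\ref{lem:phi} into the trace computation $\Tr_{\F_{q^2}/\F_q}(c\gamma+d)=c\,\Tr_{\F_{q^2}/\F_q}(\gamma)+2d$ and read off the condition $(1-c)a=2d$. The paper's proof is just a terser version of this; your extra care about surjectivity of the trace and promoting the inclusion to an equality fills in details the paper leaves implicit.
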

\begin{proof}
An automorphism $(x:z) \mapsto (c x + d z:z)$ preserves $$f(\rR)=\{(\gamma:1)\, :\gamma \in \F_{q^2} \textrm{ with }\Tr_{\F_{q^2}/\F_q}(\gamma)=a\}$$ if and only if $\Tr_{\F_{q^2}/\F_q}(c\gamma+d)=a$ i.e. $(1-c)a=2d$.
\end{proof}

 \begin{theorem}
 Assume that $q>3$. There exists an explicit recursive tower $\tT=(X_m)_{m \in \N}$ over $\F_q$ with limit $$\lambda(\tT)\geq \frac{2}{q-2}.$$ 
 \end{theorem}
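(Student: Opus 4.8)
The plan is to make the construction sketched in the introduction completely explicit and then verify the three things needed for a good recursive tower: that $f$ and $g$ genuinely define a tower (absolute irreducibility of the $X_m$ and growing genus), that the ramification locus is contained in $\rR$ and the splitting locus contains $\sS$, and that Corollary 7.2.11 of \cite{Stichtenoth} applies to give the stated bound. First I would choose the automorphism $\psi$ of $C_0=\P^1$: since $G$ acts transitively on $\sS=\P^1(\F_q)$ and $\psi$ must be an $\F_q$-automorphism preserving $\rR$ and $\sS$, a natural choice is $\psi\in G$ itself (an element of the Singer subgroup, which fixes $Q,\bar Q$ and permutes $\P^1(\F_q)$), or more generally an element of $\PGL_2(\F_q)$ fixing $\{Q,\bar Q\}$. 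Then I would pick $\phi$ of the form $(x:z)\mapsto(cx+dz:z)$ satisfying the relation $(1-c)a=2d$ of Proposition~\ref{prop:phi}, with $c\neq 0$, and set $g=\phi\circ f\circ\psi$. By construction $g$ is an $\F_q$-morphism of degree $q+1$, it is ramified exactly over $\phi(f(\psi^{-1}(Q)))$ and $\phi(f(\psi^{-1}(\bar Q)))$, and its ramification points lie in $\psi^{-1}(\{Q,\bar Q\})\subset\rR$; the totally split fibre $f^{-1}(\infty)=\sS$ is sent by $g$ into $f(\sS)$-type behaviour, so that $\sS$ is in the splitting locus.

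Next I would arrange the extra conditions flagged in the introduction, using the hypothesis $q>3$. The key is to choose $\psi$ and the pair $(c,d)$ so that $g(Q)=Q$ and $Q,\bar Q$ are \emph{not} ramification points of $g$ — equivalently, $\psi^{-1}(Q)\notin\{Q,\bar Q\}$ while $\phi(f(\psi^{-1}(Q)))=Q$. Concretely, after the change of coordinates $\mu$ from the proof of Lemma~\ref{lem:phi} (which conjugates $f$ to $x\mapsto x^{q+1}$), the Singer group becomes multiplication by $(q+1)$-st roots of unity, $\psi$ becomes multiplication by some such root $\zeta$, and the condition becomes an elementary equation for $\zeta$ and $\phi$; one checks it has a solution with $\zeta\neq 1$ precisely when $q+1>3$, i.e. $q>3$. (This is also where $\F_2$ and $\F_3$ fail: the group $G$ is too small to provide a nontrivial $\psi$ with the required incidences.) With such a choice, $Q$ is a common ramification point of $f$ and a totally ramified fixed point of $\pi_m$-type behaviour at every level, while being unramified for $g$; a standard argument (as in \cite{garsti} or \cite[Ch.~7]{Stichtenoth}) then shows each $X_m$ is absolutely irreducible — one exhibits a point, namely the unique point above $(Q,\dots,Q)$, that is rational over the constant field — and that $g(X_m)\to\infty$ because new ramification over $\rR$ is introduced at each step (Riemann–Hurwitz, together with the fact that the ramification locus is finite but nonempty, forces genus growth once the tower is non-trivial).

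Having established that $\tT$ is a tower with ramification locus contained in $\rR$ and splitting locus containing $\sS$, the limit bound is immediate from \cite[Corollary 7.2.11]{Stichtenoth}: $\lambda(\tT)\geq \dfrac{2\,\#\sS}{\#\rR-2}$. Here $\#\sS=\#\P^1(\F_q)=q+1$ and $\#\rR=\#\P^1(\F_{q^2})-\#\P^1(\F_q)=(q^2+1)-(q+1)=q^2-q=q(q-1)$, so
\[
\lambda(\tT)\ \geq\ \frac{2(q+1)}{q(q-1)-2}\ =\ \frac{2(q+1)}{q^2-q-2}\ =\ \frac{2(q+1)}{(q-2)(q+1)}\ =\ \frac{2}{q-2},
\]
which is the asserted inequality.

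The main obstacle I expect is the irreducibility and genus-growth step: the ramification-locus/splitting-locus formalism gives the bound cheaply, but one must genuinely produce automorphisms $\psi,\phi$ making $f,g$ into a \emph{tower} — i.e. verifying $g(Q)=Q$, that $Q,\bar Q$ are unramified for $g$, and that these conditions are simultaneously solvable over $\F_q$ when $q>3$. Carrying this out cleanly is exactly where the $\mu$-coordinate reduction to the Kummer cover $x\mapsto x^{q+1}$ pays off, turning the problem into the solvability of explicit equations among $(q+1)$-st roots of unity and coefficients in $\F_q$; the case analysis $q=2,3$ must be excluded precisely because the relevant set of roots of unity is too small. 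Everything else (degrees, the trace computation identifying $f(\rR)$, the final arithmetic above) is routine given Lemma~\ref{lem:phi} and Proposition~\ref{prop:phi}.
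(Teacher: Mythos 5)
Your overall skeleton is the paper's: set $g=\phi\circ f\circ\psi$, force $g(Q)=Q$ with $Q,\bar Q$ unramified for $g$, contain the ramification and splitting loci in $\rR$ and $\sS$, and apply \cite[Corollary 7.2.11]{Stichtenoth}; your final arithmetic $2(q+1)/(q^2-q-2)=2/(q-2)$ is correct. But the step you yourself flag as the crux --- actually producing $\psi$ --- is carried out in a way that cannot work. You propose $\psi$ in the Singer group $G$ (in the $\mu$-coordinates, multiplication by a $(q+1)$-st root of unity $\zeta$), or more generally in the stabilizer of $\{Q,\bar Q\}$ in $\PGL_2(\F_q)$. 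Any such $\psi$ preserves $\{Q,\bar Q\}$, so the ramification points $\psi^{-1}(Q),\psi^{-1}(\bar Q)$ of $g$ are again $Q,\bar Q$, contradicting your own requirement that $Q,\bar Q$ be unramified for $g$. Worse, for $\psi\in G$ one has $f\circ\psi=f$, since $f$ is exactly the quotient map by $G$; then $g=\phi\circ f$, the parameter $\zeta$ disappears from the problem, and $g(Q)=Q$ would force $\phi(Q)=Q$, i.e.\ $\phi=\mathrm{id}$ because $\phi$ is affine over $\F_q$ and $\theta\notin\F_q$. So the ``elementary equation for $\zeta$ and $\phi$'' you invoke has no admissible solution for any $q$, and your diagnosis that $q=2,3$ fail because $G$ (or the set of roots of unity) is too small is not the actual obstruction.

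What is needed, and what the paper does, is the opposite: $\psi$ must move $Q$ off $\{Q,\bar Q\}$. One first chooses $\phi(x)=cx+d$ as in Proposition~\ref{prop:phi} with the extra condition $\phi(Q)\notin\{Q,\bar Q\}$; this is where $q>3$ enters, since one must avoid $c=0$, $(c,d)=(1,0)$ and $(c,d)=(-1,a)$. Then $S=\phi^{-1}(Q)$ lies in $f(\rR)$, so $f^{-1}(S)\subset\rR\setminus\{Q,\bar Q\}$, and by transitivity of $\PGL_2(\F_q)$ on $\P^1(\F_{q^2})\setminus\P^1(\F_q)$ one picks $\psi$ over $\F_q$ with $\psi(Q)\in f^{-1}(S)$, giving $g(Q)=\phi(f(\psi(Q)))=Q$ while $Q,\bar Q$ stay unramified for $g$. (Note also two sign-of-inverse slips: the condition for $g(Q)=Q$ involves $\psi(Q)$, not $\psi^{-1}(Q)$, and the branch points of $g$ are $\phi(Q),\phi(\bar Q)$, not $\phi(f(\psi^{-1}(Q)))$.) Finally, your justification of absolute irreducibility (``a point rational over the constant field'') is not the operative argument: the mechanism is that at a place of $X_m$ centred at $(Q,\dots,Q)$ the relations $g(x_i)=f(x_{i-1})$, with $f$ totally ramified and $g$ unramified at $Q=f(Q)=g(Q)$, give $v(x_i-\theta)=(q+1)\,v(x_{i-1}-\theta)$, hence ramification index at least $(q+1)^m$ over the last-coordinate line; since the total degree over that line is at most $(q+1)^m$, this forces absolute irreducibility, and genus growth then follows from Riemann--Hurwitz once $g(X_1)\ge 2$. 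With these corrections your outline coincides with the paper's proof.
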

\begin{proof}
Let $g=\phi \circ f \circ \psi$ for a choice of an automorphism $\psi$ (resp. $\phi$) defined over $\F_q$ and preserving $\rR$  and $\sS$ (resp. $f(\rR)$ and $f(\sS)$). Consider the correspondence $f,g : \P^1 \rightrightarrows \P^1$  and the curves $(X_m)_{m \in \N}$ they define. For $\tT=(X_m)_{m \in \N}$ to be a tower, it is enough to show that all the separable morphisms $\pi_{m+1}:X_{m+1}\to X_m$ are ramified. To ensure this, it is enough that $g(Q)=Q$ and that $Q$ is not a ramification point for $g$, i.e. $\phi(Q) \notin \{Q, \bar{Q}\}$. For $q>3$, there always exists $c,d \in \F_q$ with $c\neq 0$ and $(1-c) a=2d$ (as in Proposition~\ref{prop:phi}) such that $\phi(x)=cx+d$ does not map $Q$ on itself ($c=1$, $d=0$) or on $\bar{Q}$ ($c=-1$, $d=a$). Let us pick such $(c,d)$ and corresponding $\phi$.  Let us now consider $S=\phi^{-1}(Q)$. Since $\phi$ stabilizes $f(\rR)$ and $Q \in f(\rR)$, we see that the points of $f^{-1}(S) =\{T_1,\bar{T_1},\ldots, T_{(q+1)/2},\bar{T}_{(q+1)/2}\}$ are in $\rR$. One then picks an automorphism $\psi$ defined over $\F_q$ such that $\psi(Q)=T_i$ for $1 \leq i \leq \frac{q+1}{2}$. For such a choice, one has $g(Q)=Q$. The general formula of \cite[Corollary 7.2.11.]{Stichtenoth}  then gives that 
\[\lambda(\tT) \geq \frac{2 \#\sS}{\# \rR-2} = \frac{2}{q-2}. \tag*{\qedhere}\]
\end{proof}

Lastly, we derive in odd characteristic explicit equations for the maps $f$ and $g$ above. Let $\chi(X)=X^2-a X+b$ be the minimal polynomial of $\theta$. 
Requiring that $f(Q)=Q, f(\bar{Q})=\bar{Q}$ and $f(\infty)=\infty$ with $Q$ and $\bar{Q}$ totally ramified and $\infty$ totally split implies that 
$$f(x) = \frac{x^{q+1} -a x +b}{x^q-x}.$$

Let us now consider one of the $\psi$ from above and denote $R=(\rho:1)=\psi^{-1}(Q) \notin \{Q,\bar{Q}\}$ and $(\nu:1)=g(R)=\phi(Q)$. Let $X^2-t X+n$ be the minimal polynomial of $\rho$. The fact that  $g$ is defined over $\F_q$, $R$ is totally ramified for $g$ and $g^{-1}(\infty)=\P^1(\F_q)$ means that
 $$g(x)=\frac{x^{q+1}+c_q x^q+c_1 x+c_0}{c (x^q-x)}$$
where 
$$c_0=n,\quad c_q- c \nu= -\rho, \quad c_1 + c \nu = - \bar{\rho}$$
from which we derive $c_q+c_1=-t$ and $c_q-c_1= c \Tr_{\F_{q^2}/\F_q}(\nu)$.
Imposing that $g(Q)=Q$ we get that 
$$c=\frac{2b+2 n+t a}{4b-a^2} \ne 0, \quad c_q-c_1=c a.$$
Since $\Tr_{\F_{q^2}/\F_q}(\nu)=(c_q-c_1)/c=a=\Tr_{\F_{q^2}/\F_q}(\theta)$, the map $\phi$ satisfies the condition of Proposition~\ref{prop:phi} and we can choose $\rho \in \F_{q^2} \setminus \F_q$ arbitrarily as long as $\rho \ne \theta,\bar{\theta}$ and $2b+2t+ta \ne 0$.
Taking conveniently $a=0$ and $t=0$ and $-n, -b \in \F_q^{\times}$ two non-squares with $n \ne \pm b$ (hence $q>5$), we get for instance
$$f(x) = \frac{x^{q+1}+b}{x^q-x} \quad  \textrm{and} \quad g(x)= \frac{2b(x^{q+1}+n)}{(b+n) (x^q-x)}.$$

For $q=5$ one can check that with $\theta$ a root of $\chi(X)=X^2+ X+2$, the automorphisms $\phi(x)=2x+3$ and $\psi(x)=1/x$ satisfy the requirements of the theorem. Hence we obtain as possible maps $f(x) = (x^6+x+2)/(x^5-x)$ and $g(x)= (x^6+x^5+2x+3)/(x^5-x).$

\end{document}